\newcommand{\R}[1]{\mathbb{R}^{#1}}
\renewcommand{\d}{\mathrm{d}}
\newcommand{\p}{\partial}
\newcommand{\tF}{\mathcal{F}}
\newtheorem{theorem}{Theorem}
\newtheorem{statement}{Proposition}
\newtheorem{remark}{Remark}
\newtheorem{lemma}{Lemma}
\newtheorem{consequence}{Corollary}
\newtheorem{definition}{Definition}
\title{%\begin{flushleft} {\small УДК: 517.948.35} \end{flushleft}
\bigskip
\bigskip
On Selfadjoint Subspace of One-Speed Boltzmann Operator}
\author{Roman Romanov\thanks{The was partially suppported by INTAS Grant 05-1000008-7883 and RFBR Grant
06-01-00249. }\\[10pt] Mihail Tihomirov}
\date{}
\begin{document}
\maketitle
\section{Introduction}

\hspace{\parindent} It is well-known \cite{Na,N} that a nonself-adjoint operator in a Hilbert space can be represented as an orthogonal sum of a self-adjoint one, and an operator having no reducing subspaces on which it induces a self-adjoint operator. A natural question about operators arising in applications is whether the first (selfadjoint) component in this sum is trivial, that is, whether the operator is completely nonself-adjoint. For differential Schr\"{o}dinger operators this question was studied earlier \cite{Pav1} and is related to the unique continuation property for solutions. In this note we study complete nonself-adjointness for one-speed Boltzmann operator \cite{JLh} arising in the theory of neutron transport in a medium with multiplication.

The main result of the paper -- theorem 2 -- is that the selfadjoint subspace  is non-trivial for any Boltzmann operator with polynomial collision integral if the multiplication coefficient has a lattice of gaps in the support of arbitra\-ri\-ly small width, that is, if the coefficient vanishes on an $ \varepsilon $-neighborhood of the set $ a \mathbb{Z} $ for some
$ a , \varepsilon > 0 $. On the other hand, the operator of the isotropic problem turns out to be completely nonself-adjoint if the multiplication coefficient is non-zero on a semi-axis (proposition \ref{halfax}). For anisotropic problem we give an example (corollary \ref{half}) showing that under an appropriate choice of the collision integral the operator may turn out to be completely nonself-adjoint for any non-vanishing multiplication coefficient. Finally, for the three-dimensional Boltzmann operator we establish non-triviality of the self\-adjoint subspace for any non-zero multiplication coefficient (theorem \ref{3dim}).

Let us describe the structure of the paper. Proposition 1 gives a version of the abstract theorem on decomposition of an operator in the sum of selfadjoint and completely nonself-adjoint ones convenient for our purposes. A close assertion in terms of the resolvent is contained in \cite{N}. Theorem 2 is proved by a direct construction of a non-zero function lying in the self-adjoint subspace. It occupies sections 3 and 4. The same problem for the three-dimensional Boltzmann operator is studied in section 5.

The authors are indebted to P. Kargaev for a useful discussion.

The following notation is used throughout: 
\begin{itemize}
\item If $\big\{S_i\big\}_{i\in I}$ is a family of subsets of a Hilbert space, then $\bigvee\limits_{i\in I} S_i$ is the closure of the linear span of the set $\bigcup\limits_{i\in I} S_i$.
\item  If $f\in L^2(\R{})$, then $\Hat{f}\in L^2(\R{})$ is the Fourier transform of $f$:
\[
\Hat{f}(p) \overset{\mathrm{def}}{=}
       \frac{1}{\sqrt{2\pi}} \int_{-\infty}^{\infty}e^{-ipx}f(x)\,\d x .
\]
\item $H^2_\pm $ --- the Hardy classes of analytic functions in the upper and lower half planes, respectively.
\item The abbreviation a. e. refers to the Lebesgue measure on $\mathbb{R} $. For a measurable function $f$ on $\R{}$ the notation $\mathrm{supp}f $ stands for the set
$\left\{x\in\ \mathbb{R}:\; f(x)\ne 0\right\}$ defined up to a set of zero measure.
\end{itemize}

\section{Definitions and Preliminaries}

\hspace{\parindent} The Boltzmann operator acts in the space
$L^2(\R{}\times[-1,1])$ of functions $ u = u ( x,\mu)$ ($ x \in \R{} $,
$\mu \in [ -1 , 1 ] $) endowed with the standard Lebesgue measure, by the formula:

\begin{equation}
( Lu ) ( x , \mu ) = i\mu \left( \p_x u \right) ( x , \mu ) + i
\sum_{\ell=1}^{n}c_{\ell}(x)\varphi_{\ell}(\mu) \int_{-1}^{1} u (
x , \mu^\prime ) \overline {\varphi_\ell (\mu')}\,\d\mu'.
\label{Bo}
\end{equation}
Here the functions $c_{\ell}\in L^{\infty}(\R{})$, $\varphi_{\ell}\in L^\infty
[-1,1]$, $\ell = 1,...,n$ are known pa\-rameters of the problem. The functions $c_{\ell}$ are assumed to be real-valued. The case $ n = 1 $, $ \varphi_1 \equiv 1 $ (isotropic scattering) is of special interest. In this si\-tuation the function $ c_1 $ is called the local multiplication coefficient, and the index $ 1 $ is omitted. Without loss of generality, one assumes throughout that the functions $\left\{\varphi_{\ell}\right\}_{\ell=1}^{n}$ are linearly independent.

Under the conditions imposed the operator $ L $ is the sum of the operator $ L_0 = i\mu\p_x $, selfadjoint on its natural domain, and a bounded one  (see \cite{JLh,Shikhov} for details). According to the non-stationary Boltzmann equation, if $ u^t $ is the particle density at time $ t $, then $ u^t = e^{ itL } u^0 $. Notice that the literature on the Boltzmann operator uses for it an expression different from (\ref{Bo}) by the factor $ i $, because of a different definition of the exponential function. The evolution operator $ u^0 \mapsto u^t $ in our notation coincides with the standard one. 

Let $ D $ be an operator in a Hilbert space $H$ of the form $ D
= A + i K $, where $ A $ is selfadjoint, and $ K $ is selfadjoint and bounded.

\begin{definition} The subspace $H_0 \subset H $ is the selfadjoint subspace of the operator  $D$, if
\begin{enumerate}
\item $H_0$ reduces\footnote{that is, the orthogonal projection on $ H_0 $ in $ H $ preserves the domain $\cal D $ of the operator $ D $, and $ D f \in H_0 $, $ D^* f \in H_0 $ for all $ f \in H_0
\cap {\cal D} $.} $D$, and the restriction $\left. D\right|_{H_0}$ is a selfadjoint operator in $H_0$;
\item any reducing subspace $H^\prime $ of the operator $D$ such that 
$\left. D\right|_{H^\prime}$
is a selfadjoint operator in $H^\prime $ is contained in $ H_0$.
\end{enumerate}

An operator $ D $ is called completely nonself-adjoint if its selfadjoint subspace is trivial.
\end{definition}

\begin{statement}\label{th1} The orthogonal complement of the selfadjoint subspace of the operator $D$ coincides with the subspace
\[
H_1\overset{\mathrm{def}}{=}\bigvee_{t\in\R{}}e^{iAt}\mathrm{Ran}\,K.
\]
\end{statement}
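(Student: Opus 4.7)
The plan is to prove the equivalent identity $H_0=H_1^\perp$ by a pair of inclusions, keyed on one preliminary structural observation. Since $D-D^*=2iK$, any $f\in H_0\cap\mathcal{D}$ on which $D|_{H_0}$ agrees with its adjoint must satisfy $Kf=0$; I would then use the boundedness of $K$ to extend this to $K|_{H_0}=0$ by continuity. The same boundedness gives $\mathcal{D}(A)=\mathcal{D}(D)=\mathcal{D}(D^*)$, so writing $A=\tfrac12(D+D^*)$ and running through the footnote's reducing conditions for $D$ shows that $H_0$ also reduces $A$ and that $D|_{H_0}=A|_{H_0}$.

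Given this setup, the inclusion $H_0\subset H_1^\perp$ is immediate: for $f\in H_0$ and $h\in H$ one has $e^{-iAt}f\in H_0\subset\ker K$, so
\[
\langle f,e^{iAt}Kh\rangle=\langle e^{-iAt}f,Kh\rangle=\langle Ke^{-iAt}f,h\rangle=0,
\]
which means $f\perp e^{iAt}\mathrm{Ran}\,K$ for every $t$, hence $f\perp H_1$. Reading the same calculation in the opposite direction characterizes $M:=H_1^\perp$ as the set of $f$ with $Ke^{-iAt}f=0$ for all $t\in\mathbb{R}$, equivalently $e^{-iAt}f\in\ker K$ for all $t$. This description is visibly invariant under the unitary group $\{e^{iAt}\}$, so $M$ reduces $A$; taking $t=0$ further yields $M\subset\ker K$. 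Consequently, on $M\cap\mathcal{D}$ both $Df$ and $D^*f$ collapse to $Af$, which lies in $M$ because $M$ reduces $A$, so $M$ itself reduces $D$ and $D|_M$ is selfadjoint. The maximality clause in the definition of $H_0$ then forces $M\subset H_0$, completing the argument.

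The main obstacle, and the only step that is not a one-line inner-product manipulation, will be the passage from unitary-group invariance of $M$ to the domain-level reducing property for the \emph{unbounded} operator $A$ demanded by the footnote. This amounts to the standard fact that invariance of a closed subspace under $\{e^{iAt}\}_{t\in\mathbb{R}}$ forces commutation of its orthogonal projection with the resolvent of $A$, hence with $A$ itself on $\mathcal{D}(A)$; but this is the sole place where the unboundedness of $A$ enters the picture, and I would flag it explicitly rather than leave it implicit.
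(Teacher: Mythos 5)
Your argument is correct and follows essentially the same route as the paper: both inclusions rest on the observations that $H_1$ (hence $H_1^\perp$) reduces $A$, that $H_1^\perp\subset\mathrm{Ker}\,K$ and $H_0\subset\mathrm{Ker}\,K$ via $D-D^*=2iK$, and on the maximality clause in the definition of $H_0$. Your version merely spells out the domain/reducing details that the paper leaves implicit.
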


\begin{proof} Let $H_0$ be the selfadjoint subspace of the operator $D$. By definition, the subspace $H_1$, and hence $ H_1^\perp $, reduces the operator $ A $. Since, obviously, $H_1^\perp
\subset\mathrm{Ker}\, K $ we obtain from this that $H_1$ and $ H_1^\perp
$ are reducing subspaces of the operator $ D $ such that the restriction of $ D $ to $ H_1^\perp $ is a selfadjoint operator. Thus, $H_1^\perp \subset H_0 $. Let us show that $ H_1 \subset
H_0^\perp $. Indeed, the subspace $ H_0 $ reduces
$ D $, and therefore the operator $ A = \left( D + D^*
\right)/2 $ as well. This means, in particular, that $
e^{iAt}\mathrm{Ran}\, K \subset H_0^\perp $ for all real
$ t $, since $ H_0\subset\mathrm{Ker}\, K $.
\end{proof}

In what follows we are going to use the fact that the selfadjoint subspace $ H_0 $ is reducing for the operator $ A $ as well, and the selfadjoint part of $ L $ coincides with the restriction of $ A $ to $ H_0 $.

For the Boltzmann operator (\ref{Bo}) $ A = L_0 = i\mu\p_x $, and a straightforward calculation gives
\begin{equation}
\label{exp1d}
 \big(e^{itA}f\big)(x,\mu)=f(x - \mu t,\mu).
\end{equation}

For a function $ \xi \in L^{\infty}(\R{}) $ let us denote by $ \mathcal{D}_\xi   $ the set of compactly supported functions $ h \in L^2 ( \mathbb{R} ) $ vanishing outside $\mathrm{supp}\, \xi $.

\begin{consequence}
\label{l1} A function $ f\in L^2(\R{}\times[-1,1]) $ belongs to the selfadjoint subspace of the operator (\ref{Bo}) if and only if
\begin{equation}\label{cond}
\int_{[-1,1]\times \mathbb{R}} f(x-\mu
t,\mu){\overline{\varphi_{\ell}(\mu)}} h ( x ) \,\d\mu \d x = 0
\end{equation}
for all $\ell = 1,...,n$, $t\in\R{}$, and $ h \in
\mathcal{D}_{c_{\ell}}$.
\end{consequence}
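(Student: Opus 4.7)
The plan is to specialize Proposition~\ref{th1} to $L=A+iK$, where $A=i\mu\p_x$ and
\[
(Ku)(x,\mu)=\sum_{\ell=1}^n c_\ell(x)\varphi_\ell(\mu)\int_{-1}^1 u(x,\mu')\overline{\varphi_\ell(\mu')}\,\d\mu'
\]
is bounded and, since the $c_\ell$ are real, selfadjoint; then to unfold the orthogonality $f\perp H_1$ coordinate-wise using (\ref{exp1d}).

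The first step is to describe $\mathrm{Ran}\,K$ up to closed linear span. Using linear independence of the $\varphi_\ell$, I can pick $g_\ell\in L^\infty[-1,1]$ (in fact inside the span of the $\varphi_k$) biorthogonal to $\varphi_k$, i.e.\ $\int g_\ell\overline{\varphi_k}\,\d\mu=\delta_{k\ell}$. Plugging the tensor $u(x,\mu)=F(x)g_\ell(\mu)$ into $K$ produces $c_\ell(x)\varphi_\ell(\mu)F(x)$, so $\mathrm{Ran}\,K$ is precisely the linear sum, over $\ell\le n$ and $F\in L^2(\R{})$, of such rank-one-in-$\mu$ vectors.

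Next, I apply $e^{iAt}$ via (\ref{exp1d}), take the inner product with $f$, use that $c_\ell$ is real, and substitute $y=x-\mu t$ in the $x$--integral (then rename $-t\mapsto t$, harmless since $t$ ranges over $\R{}$). The condition $f\perp e^{iAt}\mathrm{Ran}\,K$ becomes
\[
\int f(x-\mu t,\mu)\overline{\varphi_\ell(\mu)}\,c_\ell(x)\overline{F(x)}\,\d\mu\,\d x=0
\]
for every $t\in\R{}$, every $\ell$, and every $F\in L^2(\R{})$.

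To reach (\ref{cond}) it remains to observe that the sets $\{c_\ell\overline{F}:F\in L^2(\R{})\}$ and $\mathcal{D}_{c_\ell}$ have the same closed linear span in $L^2(\R{})$, namely $L^2(\mathrm{supp}\,c_\ell)$: any $g\in L^2(\mathrm{supp}\,c_\ell)$ is approximated in one case by $c_\ell\cdot(g/c_\ell)\chi_{\{|c_\ell|\ge 1/n\}}$, in the other by compactly supported truncations --- which lies in $\mathcal{D}_{c_\ell}$ --- directly from the definition. This replacement converts the displayed identity into (\ref{cond}). The whole argument is essentially a transcription of Proposition~\ref{th1} in coordinates; I expect no genuine obstacle, the only place requiring a moment's care being this final density step, since $c_\ell$ may vanish arbitrarily slowly on its support and cannot literally be divided out.
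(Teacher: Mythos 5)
Your argument is correct and is exactly the route the paper intends: the corollary is stated as an immediate consequence of Proposition~\ref{th1} together with formula (\ref{exp1d}), and your write-up simply fills in the implicit details (the biorthogonal description of the closed span of $\mathrm{Ran}\,K$, the change of variables $y=x-\mu t$, and the density of $\mathcal{D}_{c_\ell}$ and of $\{c_\ell \overline{F}\}$ in $L^2(\mathrm{supp}\,c_\ell)$). The final density step goes through because the functional $h\mapsto\int f(x-\mu t,\mu)\overline{\varphi_\ell(\mu)}h(x)\,\d\mu\,\d x$ is $L^2$-continuous in $h$, so no genuine obstacle remains.
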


Thus, we have to find out if there exists a non-zero function
$ f $ satisfying the condition (\ref{cond}). Let us first explain on the formal level the method we use. For simplicity, let the function $ c $ be the indicator of an interval $ I $, and $ \varphi \equiv 1 $. Then the condition of the lemma means that \[
\int_{-1}^1 f(x-\mu t,\mu)\,\d\mu = 0
\] for all $ t\in\R{} $ and $ x \in I $. We will search for the function $ f $
in the form \[ f ( x , \mu )  = \int_{\mathbb{R}} e^{i\tfrac{xq}\mu} u
( q,\mu )\,\mathrm{d}q . \] Substituting and interchanging the order of integrations, we obtain: \[ 0 = \int_{\mathbb{R}}\d q\, e^{-iqt}
\int_{-1}^1 e^{i\tfrac{xq}\mu} u(q,\mu)\,\mathrm{d}\mu . \] Since this equality is an identity in $ t $, the inner integral must be zero for all $ q $. After the change of variable $ p
= 1/\mu $ in this integral we arrive at the following uniqueness problem for the Fourier transform: is there a nonzero function $ v ( q , p ) $ such that its Fourier transform in the second variable $ \mathcal{F} v $ vanishes at all points of the form $ ( q , -xq )$, $ q \in \mathbb{R} ,\, x \in I $.
A rigorous argument requires analysis of certain integral transforms of the Fourier type, definitions and elementary properties of which are given in the next section.

\section{Integral Transforms}

\hspace{\parindent} Let $ \omega \subset \mathbb{R} $ be a compact interval. We set the notation for certain classes of functions of variables  $q\in \mathbb{R}$ and $\mu\in[-1,1]$ and transforms between them:
\begin{itemize}
\item $H = L^2(\R{}\times[-1,1]) $.
\item $\overset{\circ}{C}_\omega $
 --- the linear set of functions $u\in  C_0^{\infty}(\R{}\times
[-1,1])$ vanishing for $ q \notin \omega $.
\item $ H_\omega $ --- the subspace in $ L^2\big(\mathbb{R}
\times [-1,1],|\mu|\,\d q \, \d\mu\big) $ of functions vanishing for $ q \notin \omega $.
\item $ \Phi $ and $ \Phi^\ast $ --- unitary mutually inverse operators \[ \Phi : H \rightarrow
L^2(\R{}\times[-1,1],|\mu|\mathrm{d}q \, \mathrm{d}\mu) , \]
\[ \Phi^\ast :
L^2(\R{}\times[-1,1],|\mu|\mathrm{d}q \, \mathrm{d}\mu) \rightarrow H,  \]
defined on finite smooth functions by formulae \[
\big(\Phi f\big)(q,\mu) = \frac{1}{\sqrt{2\pi}}\int_{\mathbb{R}}
                          e^{-i xq\mu}f(x,\mu)\,\mathrm{d}x,
\]
\[
\big(\Phi^{\ast} u\big)(x,\mu) = \frac{1}{\sqrt{2\pi}}\int_{\R{}}
                          e^{i\tfrac{xq}{\mu}}u(q,\mu)\,\mathrm{d}q .
\]
\end{itemize}

\begin{remark}\label{eqvi}
It is obvious that $\Phi\big(-i\mu\p_x\big)\Phi^{\ast}$ is the operator of multiplication by the independent variable $q$ in $L^2(\R{}\times[-1,1],|\mu|\mathrm{d}q \, \mathrm{d}\mu)$.
\end{remark}

Let us denote by $ \tF $ the unitary operator $ \tF \colon
L^2(\R{2}) \to L^2(\R{2}) $ of the Fourier transform in the second variable:
\[
\big(\tF u\big)(q,p)= \frac 1{\sqrt{2\pi}}\int_{\mathbb{R}}
                          e^{-isp}u(q,s)\,\mathrm{d}s .
\]

We are going to use the change of variables \[ ( J v ) ( q , s ) :
=
\begin{cases} s^{-2} v \left( q , s^{ -1} \right), & |s| > 1 \cr 0, &
|s| < 1 , \cr \end{cases} \] which defines an isometry \[ J \colon
L^2(\R{}\times[-1,1],|\mu|\mathrm{d}q \, \mathrm{d}\mu) \to
L^2(\R{2},| p|\mathrm{d}q \, \mathrm{d}p) . \]

For any smooth finite function $v $ defined in the strip $ \R{}\times
[-1,1] $ let
\begin{equation}
\label{psi} \big(\Psi v\big)(q,x) = \frac{1}{\sqrt{2\pi}}\int_{|p|
> 1 }  e^{ixqp} p^{ -2 } v\left( q,\frac 1p \right)\,\mathrm{d}p .
\end{equation}

\begin{lemma} For any closed interval $ \omega $ not containing $ 0 $, the transform defined by the formula (\ref{psi}) on $
\overset{\circ}{C}_\omega $ is extended to a bounded operator $\Psi$ from $H_\omega $ to $L^2(\R{2})$ acting by the following formula:
\begin{equation} \label{PsH} \big(\Psi v\big)(q,x) = ( \tF
J v) ( q , -xq ) , \; \; v \in H_\omega .
\end{equation}
\end{lemma}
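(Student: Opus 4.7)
The plan is to verify the identity (\ref{PsH}) on the dense subset $\overset{\circ}{C}_\omega\subset H_\omega$ of smooth finite functions, prove the required boundedness estimate there by reducing to Plancherel's theorem, and then extend $\Psi$ by continuity.

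First I would establish the identity. Writing out $(\mathcal{F}Jv)(q,-xq)$ with the definition of $\mathcal{F}$ and the fact that $Jv$ is supported in $\{|s|>1\}$ gives
\[
(\mathcal{F}Jv)(q,-xq) = \frac{1}{\sqrt{2\pi}}\int_{|s|>1} e^{ixqs}(Jv)(q,s)\,\mathrm{d}s
 = \frac{1}{\sqrt{2\pi}}\int_{|s|>1} e^{ixqs} s^{-2}v\!\left(q,\tfrac{1}{s}\right)\mathrm{d}s,
\]
which is precisely $(\Psi v)(q,x)$ from (\ref{psi}) (with $s$ playing the role of $p$).

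Next I would estimate $\|\Psi v\|_{L^2(\mathbb{R}^2)}$ for $v\in\overset{\circ}{C}_\omega$. Using the identity just proved and the change of variable $y=-xq$ (with Jacobian $|q|$) fibrewise in $q$,
\[
\int_{\mathbb{R}}|(\mathcal{F}Jv)(q,-xq)|^2\,\mathrm{d}x
 = \frac{1}{|q|}\int_{\mathbb{R}}|(\mathcal{F}Jv)(q,y)|^2\,\mathrm{d}y
 = \frac{1}{|q|}\int_{\mathbb{R}}|(Jv)(q,s)|^2\,\mathrm{d}s,
\]
the last step being Plancherel's theorem in the second variable (applicable because $Jv(q,\cdot)$ lies in $L^2(\mathbb{R})$: on $\{|s|>1\}$ the weight $|p|$ dominates $1$, so $L^2(|p|\,\mathrm{d}p)\hookrightarrow L^2(\mathrm{d}p)$). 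Reverting to the variable $\mu=1/s$ gives $\int_{\mathbb{R}}|(Jv)(q,s)|^2\,\mathrm{d}s = \int_{-1}^{1}\mu^2|v(q,\mu)|^2\,\mathrm{d}\mu$. Since $\omega$ is a compact interval not containing $0$, the constant $c := \inf_{q\in\omega}|q|$ is strictly positive; combined with the elementary inequality $\mu^2\le|\mu|$ on $[-1,1]$ this yields
\[
\|\Psi v\|_{L^2(\mathbb{R}^2)}^{2}
 \le \frac{1}{c}\int_{\omega}\!\int_{-1}^{1}|\mu|\,|v(q,\mu)|^{2}\,\mathrm{d}\mu\,\mathrm{d}q
 = \frac{1}{c}\|v\|_{H_\omega}^{2}.
\]

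Finally, since $\overset{\circ}{C}_\omega$ is dense in $H_\omega$, the estimate above allows $\Psi$ to be extended uniquely to a bounded operator from $H_\omega$ to $L^2(\mathbb{R}^2)$; and because both sides of (\ref{PsH}) are continuous in $v$ with respect to the $H_\omega$-norm (the right-hand side by composition of bounded maps $J$ and $\mathcal{F}$ and the same change-of-variable argument), the identity persists on all of $H_\omega$.

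The only delicate point is the Jacobian factor $1/|q|$ arising from evaluating the Fourier transform along the line $p=-xq$: the lemma is precisely false at $q=0$, and the hypothesis $0\notin\omega$ is used exactly once, to absorb this factor into a finite constant. Everything else is bookkeeping with Plancherel and the obvious bound $\mu^{2}\le|\mu|$ on $[-1,1]$.
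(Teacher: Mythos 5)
Your proof is correct and follows essentially the same route as the paper: verify (\ref{PsH}) directly on $\overset{\circ}{C}_\omega$, then bound $\|\Psi v\|_{L^2(\R{2})}$ via the Jacobian $1/|q|\le 1/\mathrm{dist}(0,\omega)$ of the substitution $y=-xq$, Plancherel in the second variable, and the embedding $L^2(|p|\,\d p)\hookrightarrow L^2(\d p)$ on $\{|p|>1\}$ (the paper packages the last two steps as the statement that $J\colon H_\omega\to L^2(\R{2})$ is bounded and $g\mapsto(\tF g)(q,-xq)$ is bounded on $JH_\omega$). The only difference is presentational, and your closing remark correctly identifies the single place where $0\notin\omega$ is used.
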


\begin{proof} By definition (\ref{psi}), the equality
(\ref{PsH}) is satisfied for all $ v \in \overset{\circ}{C}_\omega
$, and the Fourier transform $ \tF $ in it can be understood classically. Then, for any $ g \in L^2(\R{2}) $ vanishing when $ |q| < a \colon = \mbox{dist} ( 0 ,
\omega ) $, we have:
\[
\int_{\R{2}} \left| ( \tF g ) ( q , -xq ) \right|^2 \mathrm{d}q \,
\mathrm{d}x = \int_{\mathbb{R}} \mathrm{d}q \frac 1{|q|}
\int_{\mathbb{R}}  \left| ( \tF g ) ( q , x ) \right|^2
\mathrm{d}x \le \frac{1}{a}\int_{\R{2}}  \left| g ( q , x )
\right|^2 \mathrm{d}x \, \mathrm{d}q .
 \]
Thus, the map $ \Psi $ is a composition of the bounded operator $ J \colon H_\omega \to L^2(\R{2}) $ and the map $ g
\mapsto (\mathcal{F}g)( q , -xq ) $, which is a bounded operator from the sub\-space $ J H_\omega \subset L^2(\R{2}) $ to $
L^2(\R{2}) $. Since the linear set $\overset{\circ}{C}_\omega $ is dense in $H_\omega $, it follows that the map $ \Psi $ defines a bounded operator.  Simultaneously, we have proved (\ref{PsH}).
\end{proof}

\begin{lemma}\label{qwerty}
Let $ \omega $ be a closed interval not containing $ 0 $, and let $ \varphi \in L^\infty ( -1 , 1) $. Then for any compactly supported function $ h \in L^2 (\mathbb{R}) $ and any 
$u \in H_\omega $ the following equality is satisfied for all $ t \in \mathbb{R}$: \begin{equation}
\label{ggg} \int\limits_{[-1,1]\times \mathbb{R}} \d\mu\, \d x \,
\overline{h ( x )} \varphi(\mu)\big(\Phi^{\ast} u\big)(x-\mu
t,\mu) = \int_{-\infty}^{\infty}\d q\, e^{-iqt}\left\langle \Psi
(u \varphi )( q,\cdot ), h \right\rangle_{ L^2 ( \mathbb{R} ) } .
\end{equation} 
\end{lemma}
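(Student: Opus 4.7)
The plan is to verify (\ref{ggg}) first on the dense linear set $\overset{\circ}{C}_\omega$ by a direct Fubini-style calculation, and then extend it to all of $H_\omega$ by continuity. The preceding lemma, asserting boundedness of $\Psi$ on $H_\omega$, will supply continuity of the right-hand side, while $\Phi^\ast$ being unitary does the same for the left.

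First I would take $u\in\overset{\circ}{C}_\omega$. Substituting the definition of $\Phi^\ast$, the left-hand side becomes a triple integral over $\R{}\times[-1,1]\times\omega$ whose integrand is
\[
\overline{h(x)}\,\varphi(\mu)\,e^{ixq/\mu}\,e^{-iqt}\,u(q,\mu).
\]
Since $u$ is smooth and compactly supported, $\varphi$ is bounded, and $h$ is compactly supported, this is absolutely integrable, so Fubini allows me to perform the $x$-integration first and pull out the $e^{-iqt}$ factor, obtaining
\[
\mathrm{LHS} = \frac{1}{\sqrt{2\pi}}\int_\omega \d q\, e^{-iqt}\int_{-1}^{1}\d\mu\,\varphi(\mu)\,u(q,\mu)\int_{\R{}}\d x\,\overline{h(x)}\,e^{ixq/\mu}.
\]
Next I would make the change of variable $p=1/\mu$ in the $\mu$-integral. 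A short bookkeeping of limits and orientations gives $\int_{-1}^{1}f(\mu)\,\d\mu = \int_{|p|>1}f(1/p)\,p^{-2}\,\d p$, so
\[
\int_{-1}^{1}\varphi(\mu)\,u(q,\mu)\,e^{ixq/\mu}\,\d\mu = \int_{|p|>1}p^{-2}\,\varphi(1/p)\,u(q,1/p)\,e^{ixqp}\,\d p = \sqrt{2\pi}\,\Psi(u\varphi)(q,x)
\]
directly from (\ref{psi}). Reading the remaining $x$-integral as a pairing against $h$ then yields the right-hand side of (\ref{ggg}) for $u\in\overset{\circ}{C}_\omega$.

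For the density step I would observe that both sides are bounded linear functionals of $u\in H_\omega$. After the change $y=x-\mu t$, the left-hand side equals $\langle\Phi^\ast u,\,g\rangle_{L^2(\R{}\times[-1,1])}$ with $g(y,\mu) = h(y+\mu t)\,\overline{\varphi(\mu)}$, and is bounded by $\sqrt{2}\,\|u\|_{H_\omega}\,\|h\|_{L^2}\,\|\varphi\|_\infty$ since $\Phi^\ast$ is isometric. The right-hand side equals $\langle\Psi(u\varphi),\,\mathbf{1}_\omega(q)\,e^{iqt}\,h(x)\rangle_{L^2(\R{2})}$, using that $\Psi$ preserves the support condition $q\in\omega$; boundedness of this follows from the preceding lemma. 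Density of $\overset{\circ}{C}_\omega$ in $H_\omega$ then completes the proof. There is no substantive obstacle; the only point requiring care is to see that the hypothesis $0\notin\omega$ is genuinely being used, and it is, precisely through the preceding lemma to give $\|\Psi\|<\infty$ and hence the continuity of the right-hand side in $u$ on which the density extension rests.
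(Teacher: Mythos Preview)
Your proof is correct and follows essentially the same route as the paper: verify the identity for $u\in\overset{\circ}{C}_\omega$ by a direct Fubini computation recognizing $\Psi(u\varphi)$, and then extend by density. You are slightly more explicit than the paper in writing out the change of variable $p=1/\mu$ and in justifying the continuity of both sides in $u$ (the paper simply asserts that density suffices), but the argument is the same.
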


\begin{proof}
Since $\overset{\circ}{C}_\omega $ is dense in $ H_\omega $, it is enough to prove (\ref{ggg}) for arbitrary func\-tion
$u\in\overset{\circ}{C}_\omega $. We have:
\begin{eqnarray*}
\lefteqn{\int_{-1}^{1} \varphi(\mu) \big(\Phi^{\ast}u\big)(x-\mu
t,\mu)\d\mu
 =  \frac{1}{\sqrt{2\pi}} \int_{-1}^{1}\d\mu \, \varphi(\mu)\int_{-\infty}^{\infty}
      e^{i\tfrac{(x-\mu t)q}{\mu}}u(q,\mu)\,\mathrm{d}q =}\\
& = & \int_{-\infty}^{\infty}\d q\,
e^{-iqt}\frac{1}{\sqrt{2\pi}}\int_{-1}^1
      e^{i\tfrac{xq}{\mu}}u(q,\mu) \varphi(\mu)\,\mathrm{d}\mu=
   \int_{-\infty}^{\infty}\d q\, e^{-iqt}\big(\Psi (u \varphi )\big)(q,x).
\end{eqnarray*}
Multiplying this equality by $ \overline h $ and integrating in $ x $, we obtain (\ref{ggg}). The interchange of integrations in $ x $ and $
q $ in the right hand side is possible because of the function $ h $ having a compact support.
\end{proof}

\section{Conditions of Complete Nonself-Adjointness}

In what follows, the functions $\varphi_{\ell}\in L^\infty [-1,1] $ from the definition of the Boltzmann operator are supposed to be extended by zero to the whole of the real line.

\begin{theorem}\label{th2}
Let the function $ F(q,p) \in L^2(\R{2},|p|\d q\d p)$ satisfy the following conditions:
\begin{eqnarray}
& \label{Lmu} F ( q , p ) = 0 \; \text{for} \;  |p| < 1 ; &
\\
\label{criterium} & \int_{\mathbb{R}} e^{ipxq}
F(q,p)\overline{\varphi_{\ell}(\tfrac{1}{p})}\,\d p = 0 &
\end{eqnarray}
for all $\ell = 1,...,n,$ and a. e. $q\in\R{}$,
$x\in\mathrm{supp}\,c_{\ell} $. Then the vector
\begin{eqnarray} \label{expli}
& f = \Phi^\ast u , & \\ \nonumber
& u ( x , \mu ) \colon = \mu^{-2} F\left( q, \mu^{ -1 } \right),
\; \; | \mu | < 1, &
\end{eqnarray} belongs to the selfadjoint subspace $ H_0 $ of the operator (\ref{Bo}).
The mapping $ F \mapsto f $ defines an isomorphism of the subspace
$ X \subset L^2(\R{2},|p|\d q\d p)$ singled out by conditions
(\ref{Lmu}) and (\ref{criterium}), and the space $ H_0 $. In particular, the space $ H_0 $ is non-zero if, and only if, there exists a non-zero function $ F $ satisfying conditions (\ref{Lmu}) and (\ref{criterium}).
\end{theorem}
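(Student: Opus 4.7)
The plan is to combine the criterion of Corollary \ref{l1} with the integral transforms of Section 3. By Corollary \ref{l1}, $f \in H_0$ is equivalent to the vanishing of (\ref{cond}) for all $\ell$, all $t \in \mathbb{R}$, and all $h \in \mathcal{D}_{c_\ell}$. Writing $f = \Phi^\ast u$ and observing that the prescription $u(q,\mu) = \mu^{-2} F(q, 1/\mu)$ for $|\mu|<1$ is precisely the identity $J u = F$, the map $F \mapsto f$ is a composition of the unitaries $J^{-1}$ and $\Phi^\ast$, hence an isometric isomorphism of $\{F \in L^2(\R{2},|p|\d q\d p) : F \equiv 0 \text{ for } |p|<1\}$ onto $H$. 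The theorem thus reduces to verifying that the additional condition (\ref{criterium}) cuts out exactly the subspace $H_0 \subset H$.

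To establish this equivalence, I first assume $u \in H_\omega$ for a compact interval $\omega$ avoiding zero. Applying Lemma \ref{qwerty} with $\varphi = \overline{\varphi_\ell}$ and $\bar h$ in place of $h$ rewrites the left-hand side of (\ref{cond}) as
\[
\int_{\mathbb{R}} e^{-iqt}\,\bigl\langle \Psi(u\overline{\varphi_\ell})(q,\cdot),\,h\bigr\rangle_{L^2(\mathbb{R})}\,\d q .
\]
Fourier uniqueness in $t$ shows this vanishes for every $t \in \mathbb{R}$ iff $\langle\Psi(u\overline{\varphi_\ell})(q,\cdot),h\rangle = 0$ for a.\,e.\ $q$; letting $h$ range over $\mathcal{D}_{c_\ell}$ converts this into the pointwise condition $\Psi(u\overline{\varphi_\ell})(q,x) = 0$ for a.\,e.\ $(q,x)$ with $x \in \mathrm{supp}\,c_\ell$. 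A direct substitution in the definition of $J$ yields $J(u\overline{\varphi_\ell})(q,p) = F(q,p)\overline{\varphi_\ell(1/p)}$ for $|p|>1$, so Lemma 1 identifies the left-hand side with $(2\pi)^{-1/2}\int e^{ipxq}F(q,p)\overline{\varphi_\ell(1/p)}\,\d p$, and the condition becomes exactly (\ref{criterium}).

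The final step will be to remove the hypothesis $u \in H_\omega$ by truncation. Put $F_n := F\cdot\chi_{\{1/n \leq |q| \leq n\}}$ and split into positive- and negative-$q$ parts, so each pulls back under $J^{-1}$ to an element of some $H_\omega$; the conditions (\ref{Lmu}) and (\ref{criterium}) pass trivially from $F$ to $F_n$. For the forward direction I apply the previous step to $F_n$ to conclude $f_n := \Phi^\ast J^{-1} F_n \in H_0$, then use $f_n \to f$ in $H$ and the closedness of $H_0$. For the converse, given $f \in H_0$ one sets $u = \Phi f$, $F = J u$ (automatically satisfying (\ref{Lmu})) and runs the identity backwards for each $u_n := u\cdot\chi_{\{1/n \le |q| \le n\}}$: since $f_n \to f$ in $H$ the left-hand sides of (\ref{ggg}) for $u_n$ tend to $0$, so by Fourier uniqueness and a limit in $L^2(\d q)$ the inner products $\langle \Psi(u_n\overline{\varphi_\ell})(q,\cdot),h\rangle$ tend to zero for each $h \in \mathcal{D}_{c_\ell}$, forcing (\ref{criterium}) for $F$. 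The last sentence of the theorem is then immediate from the isomorphism. The main obstacle I anticipate is precisely this approximation step, which requires justifying convergence of the $\Psi$-integrals despite $\Psi$ being defined only on the spaces $H_\omega$; the remaining steps are mechanical consequences of the unitarity of $\Phi$ and $J$ and of the Fourier inversion formula.
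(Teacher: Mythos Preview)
Your outline follows the paper's approach closely: the isometry $F\mapsto f$ is the composition $\Phi^\ast J^{-1}$, and the core computation for $u\in H_\omega$ combining Lemma~\ref{qwerty} with (\ref{PsH}) is exactly what the paper does. The forward direction (truncate, apply the $H_\omega$ case, use that $H_0$ is closed) is correct and matches the paper.

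The converse direction, however, has a genuine gap that you yourself flag. From $f_n\to f$ in $H$ you only get that the left-hand side of (\ref{ggg}) for $u_n$ tends to $0$ \emph{pointwise} in $t$; this is not enough to invoke Fourier uniqueness, and since the operator norm of $\Psi$ on $H_\omega$ blows up as $\mathrm{dist}(0,\omega)\to 0$, you have no uniform control allowing a limit in $L^2(\d q)$ either. The paper circumvents this entirely by a structural observation you do not use: because $H_0$ reduces $L_0$ (noted right after Proposition~\ref{th1}), and because by Remark~\ref{eqvi} the truncation $u\mapsto u\cdot\chi_{\{1/n\le|q|\le n\}}$ is precisely the spectral projection $P_{\omega_n}$ of $L_0$, the vector $f_n=\Phi^\ast u_n=P_{\omega_n}f$ already lies in $H_0$. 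Hence the left-hand side of (\ref{ggg}) for $u_n$ is \emph{exactly} zero for every $t$, Fourier uniqueness applies cleanly, and (\ref{criterium}) follows for each $F_n$ and thus for $F$. With this one-line fix your argument becomes essentially identical to the paper's; no delicate limiting of $\Psi$-integrals is needed.
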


\begin{remark}
The equality (\ref{criterium}) is understood as a condition of vanishing of the Fourier transform in the second variable of the function $
F(q,p)\overline{\varphi_{\ell}( p^{-1})}$ lying in the space $L^2(\R{2})$, on the set $ M_\ell \equiv \{ ( q ,
-xq ): \; q \in \mathbb{R} , \, x \in \mathrm{supp}\,c_\ell \} $ of positive planar Lebesgue measure.
\end{remark}

\begin{proof}
Substituting $ p = \mu^{ -1 } $, we immediate\-ly verify that the map $ F \mapsto f $, defined in the theorem, is an isometry from $ X $ to $ H $. Let us show that $ f \in H_0
$ for any $ F $ from the dense in $ X$ linear set of functions $ F \in X $  such that $ F(q,p) = 0 $ when $ q \notin \omega $ for some closed interval $ \omega = \omega ( F ) $ not containing $ 0 $. For such $ F $'s the function $ v
=u\overline{\varphi}_\ell $ obeys the equality (\ref{PsH}):
\begin{equation} \label{Psf} \Psi \left[
u\overline{\varphi}_\ell \right] ( q , x ) = \mathcal{F} \left[
F\left( q, p \right) \overline{\varphi_\ell \left( p^{ -1 }
\right) } \right] ( q , -xq ).
\end{equation}
By assumption (\ref{criterium}), the right hand side vanishes on the set $ \{ ( q , x ): \; q \in \mathbb{R} , \, x \in \mathrm{supp}\,c_{\ell} \} $, and thus $ \Psi \left[ u\overline{\varphi}_\ell \right] ( q
, x ) h ( x ) $ is identically zero for any function $ h \in L^2
( \mathbb{R} ) $ supported on $ \mathrm{supp}\,c_{\ell} $.
Applying the identity (\ref{ggg}), we conclude from this that
\[\int\limits_{[-1,1]\times \mathbb{R}} \d\mu\d x \,
\overline{h ( x ) \varphi_\ell (\mu)}\big(\Phi^{\ast} u\big)(x-\mu
t,\mu) = 0
\]
for all $ t\in\R{}$ and $ h \in \mathcal{D}_{c_{\ell}}$, that is, $f$ satisfies the condition of corollary \ref{l1}.

It remains to check that the range of the map $ F \mapsto f $ is the whole of $ H_0 $. Let $ f_0 \in H $ be a vector of the form $ f_0 =
P_\omega g $, where $ g \in H_0 $, $ \omega $
is a closed interval not containing point $ 0 $, and $ P_\omega $ is the spectral projection of $
L_0 $ corresponding to the interval $ \omega $. Then, $ f_0 \in H_0 $ since the subspace $H_0$ reduces the operator $ L_0 $, and hence any of his spectral projections. We shall show that $ f_0 $ lies in the range of the constructed isometry from $ X $ to $ H $.

Let $ u = \Phi f_0 $, and let $ F(q,p) = p^{ -2 } u(q,p^{ -1
})$ for $ q \in \omega $ and $|p| > 1$, $ F(q,p) = 0 $ for any other $(q,p)\in\R{2}$. By construction, the function $ F $ belongs to
$ L^2(\R{2},|p|\d q\, \d p)$ and satisfies (\ref{Lmu}) and
(\ref{expli}) with $ f = f_0 $. Notice that the function $ u $ vanishes when $ q \notin \omega $ since, according to remark \ref{eqvi}, $\Phi L_0\Phi^{\ast} $ is the operator of multiplication by the $q$ variable. Thus, the function $u
\in H_\omega $, lemma \ref{qwerty} applies to it because $ 0 \notin \omega $, and the equality (\ref{Psf}) holds true. As the function $f_0$ belongs to $ H_0 $, and hence obeys condition
(\ref{exp1d}), the left hand side in (\ref{ggg}) vanishes for the $ u $ under consideration for all $ t\in\R{}$ and $ h \in
\mathcal{D}_{c_{\ell}}$, $ \ell = 1, \dots ,n $. By uniqueness of the Fourier transform it follows that \[ \left\langle \Psi
\left[ u\overline {\varphi_\ell} \right]( q,\cdot ), h
\right\rangle_{ L^2 ( \mathbb{R} ) } = 0 \] for a. e. $ q \in
\mathbb{R} $ and all $ h \in \mathcal{D}_{c_{\ell}}$. The arbitrariness of $ h $ implies that the function $ \Psi \left[
u\overline{\varphi}_\ell \right] ( q , x ) $ vanishes on 
$ \mathbb{R} \times \mathrm{supp}\, c_{\ell}$. Then the right hand side in (\ref{Psf}) also vanishes on $
\mathbb{R} \times \mathrm{supp}\, c_{\ell}$, that is, condition (\ref{criterium}) is satisfied. It remains to notice that the set of vectors $ f_0 $ of the form under consideration is dense in $ H_0 $ since the operator $ L_0 $ is absolutely continuous. \end{proof}

\begin{theorem}
Let the function $c(x)$ be bounded, and let there be $ a,\varepsilon
> 0 $ such that $ c ( x ) = 0 $ for $ | x - x_0 - aj | <
\varepsilon $, $ j \in \mathbb{Z} $, with some $ x_0 \in
\mathbb{R} $, and let all the functions $ \varphi_\ell ( \mu ) $, $ 1
\le \ell \le n $, be polynomials. Then the selfadjoint subspace $ H_0 $ of the Boltzmann operator
\begin{equation}\label{Both2} L = i\mu \p_x + i c(x)
\sum_{\ell=1}^{n}\varphi_{\ell}(\mu) \int_{-1}^{1} \cdot \;
\overline {\varphi_\ell (\mu')}\,\d\mu'  \end{equation}
is non-trivial, and, moreover, the restriction of the selfadjoint part of the operator $ L $ to its spectral subspace corresponding to the interval $ [ - \pi / a  , \pi / a ] $ has Lebesgue spectrum of infinite multiplicity\footnote{This means that the restriction is unitarily equivalent to an orthogonal sum of infinitely many copies of the operator of multiplication by the independent variable in $ L^2
$ over this interval.}.
\end{theorem}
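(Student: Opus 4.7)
My plan is to apply Theorem~\ref{th2}: it suffices to exhibit, for each closed subinterval $\omega\subset(0,\pi/a)$, a subspace of $X$ supported in $q\in\omega$ with infinite-dimensional fibers (the case $\omega\subset(-\pi/a,0)$ is symmetric). I will handle the polynomial scattering functions by a single ansatz that reduces (\ref{criterium}) for all $\ell$ to one Fourier-support condition, then construct the $p$-profile explicitly as a product of a Schwartz factor and a periodic factor.

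Set $d=\max_{\ell}\deg\varphi_{\ell}$ and use the ansatz $F(q,p)=p^{d}G(q,p)$. Writing $\overline{\varphi_{\ell}(1/p)}=\sum_{k=0}^{d}\overline{a_{\ell,k}}\,p^{-k}$ gives $F\,\overline{\varphi_{\ell}(1/p)}=\sum_{k}\overline{a_{\ell,k}}\,p^{d-k}G$, each term a non-negative power of $p$ times $G$. Whenever $\hat G(q,\cdot)$ (the Fourier transform in the second variable) is $C^{\infty}$ with compact support in some set $S(q)$, each $p^{d-k}G$ has Fourier transform proportional to $\partial_{y}^{d-k}\hat G$ and hence supported in the same $S(q)$. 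So (\ref{criterium}) for every $\ell$ becomes the single requirement that $\hat G(q,\cdot)$ be supported in
\[
\bigcup_{j\in\mathbb{Z}}I_{j}(q),\qquad I_{j}(q):=\bigl(-q(x_{0}+aj)-q\varepsilon,\,-q(x_{0}+aj)+q\varepsilon\bigr),
\]
i.e.\ the image under $y=-xq$ of the lattice gaps on which $c$ vanishes.

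For $q\in\omega$ the period $T_{q}:=2\pi/(qa)$ exceeds $2$, so $[-1,1]$ is properly contained in $[-T_{q}/2,T_{q}/2]$. Fix $\psi$ with $\hat\psi\in C_{c}^{\infty}(-q_{*}\varepsilon,q_{*}\varepsilon)$ for $q_{*}:=\min\omega$, so that $\psi$ is Schwartz. For each $q\in\omega$, pick any $b_{q}\in L^{2}([-T_{q}/2,T_{q}/2])$ that vanishes on $[-1,1]$, extend it $T_{q}$-periodically, and set
\[
G(q,p)=e^{-ipqx_{0}}\psi(p)\,b_{q}(p).
\]
Expanding $b_{q}(p)=\sum_{n}\beta_{n}(q)e^{inpqa}$ and transforming in $p$ gives, up to a constant factor, $\hat G(q,y)=\sum_{n}\beta_{n}(q)\hat\psi(y-q(an-x_{0}))$, supported in $\bigcup_{n}\bigl(q(an-x_{0})-q_{*}\varepsilon,\,q(an-x_{0})+q_{*}\varepsilon\bigr)\subset\bigcup_{j}I_{j}(q)$ (relabel $j=-n$). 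Condition (\ref{Lmu}) holds because $b_{q}$ vanishes on $[-1,1]$; the Schwartz decay of $\psi$ ensures $F=p^{d}G\in L^{2}(\omega\times\mathbb{R},|p|\,\d q\,\d p)$.

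For each $q\in\omega$ the admissible $b_{q}$ form an isometric copy of $L^{2}([-T_{q}/2,-1]\cup[1,T_{q}/2])$, which is infinite-dimensional; since $\psi$ (of Paley--Wiener class) has only isolated zeros, $b_{q}\mapsto F(q,\cdot)$ is injective, so the fibers $X_{q}$ of the constructed subspace are infinite-dimensional. By Remark~\ref{eqvi} the operator $L_{0}$ acts on $\Phi H$ as multiplication by $q$, hence the spectral subspace of the selfadjoint part of $L$ corresponding to $\omega$ is a direct integral over $\omega$ with infinite-dimensional fibers, giving Lebesgue spectrum of infinite multiplicity on $\omega$. Letting $\omega$ exhaust $(-\pi/a,0)\cup(0,\pi/a)$ concludes the argument. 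The main obstacle, which dictates the threshold $|q|<\pi/a$, is the apparent tension between the Fourier-support requirement and $F|_{|p|<1}=0$: a bandlimited $F$ would be entire and could not vanish on an interval. The resolution is that the target support $\bigcup_{j}I_{j}(q)$ is unbounded, so non-entire $F$ are admitted, and $T_{q}>2$ is precisely what allows a non-zero periodic $b_{q}$ to vanish on $[-1,1]$.
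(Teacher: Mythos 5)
Your proposal is correct and follows essentially the same route as the paper: both reduce the statement to Theorem~\ref{th2} and build $F$ as a $q$-cutoff times a $p$-profile of the form (periodic function vanishing near the origin of its period, which is where the threshold $|q|<\pi/a$ enters) $\times$ (window whose Fourier transform is supported in $(-q\varepsilon,q\varepsilon)$), the only real variations being that you absorb the polynomial weights $\varphi_\ell$ by the prefactor $p^{d}$ where the paper instead takes the window to be $\widehat{\omega_0^{(n)}}$ so that division by $p^{j}$ is harmless, and that you count multiplicity via direct-integral fibers where the paper verifies linear independence of the reducing subspaces generated by independent periodic profiles. The one loose end is the joint measurability of $(q,p)\mapsto b_q(p)$ needed for $F$ to be a legitimate element of $L^2(\R{2},|p|\,\d q\,\d p)$; it is fixed by taking $b_q(p)=B(aqp)$ for a fixed $2\pi$-periodic $B$ vanishing on $[-a\sup\omega,\,a\sup\omega]$, which is exactly the paper's choice.
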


\begin{proof}
Without loss of generality one can assume that $ x_0 = 0 $ and $ n = 1 + \max_\ell \deg \varphi_\ell $. Let us search for a function $ F $, satisfying the conditions of theorem \ref{th2}, in the form $ F ( q , p ) =
\chi ( q ) f ( p q ) $, where $ \chi $ is an arbitrary bound\-ed function on the real axis such that $ \mathrm{supp}\, \chi = [ - b , b ] $ for some positive $ b < \pi / a $, and $ \chi ( q ) / q
\in L^2 $. The conditions of Theorem 1 will be met if the function $ f \in
L^2 ( \mathbb{R} , |p| \d p ) $ obeys the following requirements:

(\textrm{i}) $ f ( p ) = 0 $ for $ | p | \le b $;

(\textrm{ii}) $ \mathrm{supp}\, \widehat{f p^{ -j }} $ is contained in the 
$ \varepsilon $-vicinity of the set $ a \mathbb{Z} $ for all $
j $, $ 0 \le j \le n - 1 $.

We are going to use the following observation: let $ h \in L^2_{ loc } (
\mathbb{R} ) $ be an arbitrary $ 2 \pi $-periodic function, and
$ \omega $ be a smooth function on the real line supported on an interval $ ( - \delta , \delta ) $, $ \delta > 0 $. Then the function $ \xi = h \hat{\omega} $, obviously, belongs to $ L^2 (
\mathbb{R} , | p | \d p ) $,  and its Fourier transform vanishes outside the $ \delta $-vicinity of $ \mathbb{Z} $. This observation follows from elementary properties of convolution since $ h = \hat{ \rho } $ where $ \d \rho = \sum_j \rho_j
\delta ( x - j ) $ is the discrete measure with masses being the Fourier coefficients $ \rho_j $ of the restriction of the function $ h $ to a period.

Fix an arbitrary nonzero function $ h $, satisfying the conditions above and such that $ h ( p) = 0 $ for $ | p | \le \pi
- \nu $, $ \nu > 0 $. Let $ \delta = \varepsilon/a $, choose an arbitrary nonzero function $ \omega_0 \in C_0^\infty
( \mathbb{R} ) $ supported on $ ( - \delta , \delta )
$ and define the corresponding function $ \xi $ setting $ \omega =
\omega_0^{ (n) } $. Define $ f ( p ) = \xi ( a p ) $. By construction, conditions (\textrm{i}) and (\textrm{ii}) hold true for the function $ f $ for all $ \nu
> 0 $ small enough. Fix such a $ \nu $ and let:
\[
u (q,\mu) = \mu^{ -2 } F \left(q, \frac{1}{\mu} \right) .
\]
By theorem 1 the nonzero function $ g \overset{\mathrm{def}}{=} \Phi^{\ast} u
$ belongs to $ H_0 $, and the non-triviality of the subspace $ H_0 $ is proved.

To establish the assertion about the multiplicity of the spectrum notice that, as follows from remark \ref{eqvi}, the restriction of $ L_0 $ to its reducing sub\-space generated by the function $ g $ is unitarily equivalent to the operator of multi\-pli\-cation by the independent variable in the space $ L^2 $ over the support of $ \chi $, that is, in $ L^2 (  - b ,
b ) $. Each choice of the function $ h $ in the construction above then corresponds to a reducing subspace, and if continuous functions $ h_j $, $ j = 1, \dots , N $, $ N < \infty $, are mutually linearly independent, then so are the corresponding reducing subspaces $ Y_j \subset H_0 $. Indeed, the last assertion means that for any finite $
M $, any $ h_j $ satisfying the conditions above, and any $
\chi_j \in L^2 (  - b , b ) $, $ j \le N $, the following implication is true:
\[ \sum_1^M h_j ( p q ) \hat{\omega} ( pq ) \chi_j ( q ) \equiv 0
\Rightarrow \chi_j ( q ) \equiv 0 \, \forall j \le N ,
\] which is easily verified by induction. It is then enough to choose an arbitrary $ c \ne 0 $ such that $ \hat{ \omega } ( c ) \ne 0 $, and $ h_j ( c ) \ne 0 $ for at least one $ j $, and let $ p = c/ q $.

Thus, we have proved that for any $ b < \pi / a $ there is a reducing subspace in $ H_0 $ such that the restriction of the operator to it has Lebesgue spectrum of infinite multiplicity on $ [ -b , b ] $, hence the same is true of $ b = \pi / a $. Since the operator $ L_0 $ is absolutely continuous, it follows that the restriction of $ L $ to $ H_0 $ possesses the same property. \end{proof}

\begin{remark}
The proof of theorem 2 is constructive -- nonzero vectors from $ H_0
$ were found explicitly.
\end{remark}

Sometimes it is possible to say more about the spectrum of the selfadjoint part.

\begin{statement}\label{compact}
Let the function $ c ( x ) $ be compactly supported, and let all the functions $ \varphi_\ell ( \mu ) $, $ 1 \le \ell \le n $, be polynomials. Then the selfadjoint part of the operator $ L $ of the form
(\ref{Both2}) is unitarily equivalent to an orthogonal sum of infinitely many copies of the operator of multiplication by the independent variable\footnote{Theorem 2 in the situation under consideration only ensures the existence of the spectrum in a vicinity of $ 0 $.} in $ L^2 ( \mathbb{R} ) $. 
\end{statement}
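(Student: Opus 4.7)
The plan is to identify $H_0$ with the space $X$ of Theorem \ref{th2}, view the restriction of $L$ to $H_0$ as multiplication by $q$ on $X$, decompose $X$ as a direct integral of fibers $X_q$, and reduce the claim to the assertion that $\dim X_q=\infty$ for almost every $q\in\mathbb{R}$. By Theorem \ref{th2}, the map $F\mapsto \Phi^\ast u$ with $u(q,\mu)=\mu^{-2}F(q,1/\mu)$ is an isometric isomorphism from $X\subset L^2(\R{2},|p|\d q\d p)$ onto $H_0$; since the selfadjoint part of $L$ coincides with $L_0|_{H_0}$ and $\Phi L_0\Phi^\ast$ is multiplication by $q$ (Remark \ref{eqvi}), the selfadjoint part of $L$ is unitarily equivalent to the operator $M_q$ of multiplication by $q$ on $X$. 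Regarding $X$ as $\int^\oplus X_q\,\d q$, where $X_q$ is the subspace of $g\in L^2(\mathbb{R},|p|\d p)$ with $g=0$ on $|p|<1$ and $\int e^{ipxq}g(p)\overline{\varphi_\ell(1/p)}\,\d p=0$ for a.e.\ $x\in\mathrm{supp}\,c$ and every $\ell$, the operator $M_q$ acts as the scalar $q$ on each fiber, and the conclusion will follow from standard direct integral theory once $\dim X_q=\infty$ is verified for a.e.\ $q$.

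For $q=0$ the fiber conditions reduce to the $n$ scalar relations $\int g(p)\overline{\varphi_\ell(1/p)}\,\d p=0$, imposing finite codimension on the infinite-dimensional space of $g\in L^2(|p|\d p)$ supported on $|p|\ge 1$, so $\dim X_0=\infty$ trivially. Fix $q\ne 0$, choose $R$ with $\mathrm{supp}\,c\subset[-R,R]$, and let $T=R|q|$. Since the $\varphi_\ell$ are polynomials, the fiber condition is implied by the stronger requirement that $\widehat{g(p)p^{-j}}$ vanish on the whole interval $[-T,T]$ for $j=0,\dots,n-1$. Writing $g=p^{n-1}H$ with $H\in L^2(\mathbb{R})$ supported on $|p|\ge 1$, and using $\widehat{g\cdot p^{-j}}=(i\p_s)^{n-1-j}\hat H$, it suffices to exhibit infinitely many linearly independent $H$'s with $H=0$ on $(-1,1)$ and with $\hat H$ supported in $|s|\ge T$ and vanishing to order $n-1$ at each of $\pm T$ (so that the distributional derivatives remain $L^2$-functions supported on $|s|\ge T$).

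Such $H$'s will be built via a Hardy space / slit-plane analysis. Decomposing $H=H_++H_-$ with $H_\pm\in e^{\pm iTp}H^2_\pm$ (so that $\hat{H_\pm}$ is supported on $\pm[T,\infty)$), the vanishing condition $H=0$ on $(-1,1)$ becomes the boundary-matching relation $H_+=-H_-$ on $(-1,1)$; by Schwarz-type reflection this produces a function $\Phi$ holomorphic on the slit plane $\mathbb{C}\setminus((-\infty,-1]\cup[1,\infty))$ with $\Phi|_{\mathrm{UHP}}=H_+$ and $\Phi|_{\mathrm{LHP}}=-H_-$, exhibiting the prescribed half-plane exponential growth. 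The main technical obstacle is to show that this class of $\Phi$'s is infinite-dimensional---that is, that the Paley--Wiener uncertainty between the supports of $H$ and $\hat H$ does not force $H=0$. I plan to resolve this by first producing one nonzero $\Phi$, e.g., as a Cauchy integral along one of the slit rays, smoothed by $e^{\pm iTz}$ and by endpoint factors that secure the required vanishing of $\hat H$ at $\pm T$, and then generating a linearly independent family by multiplying by polynomials or by rational functions whose poles lie off the real line and are placed symmetrically in the two half-planes to preserve the $H^2_\pm$ decay. Once the infinite-dimensionality of $X_q$ is in hand, measurability of the fiber decomposition and the standard direct integral theory then assemble these fibers into the stated unitary equivalence with infinitely many copies of the operator of multiplication by the independent variable on $L^2(\mathbb{R})$.
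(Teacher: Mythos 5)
Your reduction is sound and runs parallel to the paper's: identify $H_0$ with $X$ via Theorem \ref{th2}, recognize the selfadjoint part as multiplication by $q$, and reduce everything to producing, for a given gap size $T$, an infinite-dimensional family of functions $g\in L^2(\R{},|p|\,\d p)$ vanishing on $(-1,1)$ with $\widehat{g\,p^{-j}}$ vanishing on $[-T,T]$ for $j=0,\dots,n-1$. (The paper works with global sections $F(q,p)=\chi(q)f(p)$ over an interval of $q$'s rather than fiberwise, which sidesteps the measurable-field bookkeeping your direct-integral formulation would require; that difference is cosmetic.)

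The genuine gap is that you never construct the function. What you call ``the main technical obstacle'' --- exhibiting one nonzero $\Phi$ analytic on $\mathbb{C}\setminus\big((-\infty,-1]\cup[1,+\infty)\big)$ whose restriction to $\mathbb{C}_+$ lies in $e^{iTz}H^2_+$ and whose restriction to $\mathbb{C}_-$ lies in $e^{-iTz}H^2_-$ --- is the entire content of the proof, and your proposed resolution does not work as stated. A Cauchy integral along a slit ray ``smoothed by $e^{\pm iTz}$'' is not a single analytic function on the slit plane: $e^{iTz}$ and $e^{-iTz}$ do not agree on $(-1,1)$, so multiplying by them separately in the two half-planes destroys exactly the analytic continuation across $(-1,1)$ that forces $g=\Phi^+-\Phi^-$ to vanish there. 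What is needed is a single function, analytic across $(-1,1)$, bounded on $\R{}$, that behaves like $e^{iTz}$ at infinity in $\mathbb{C}_+$ and like $e^{-iTz}$ in $\mathbb{C}_-$. The paper's Lemma \ref{example} supplies precisely this via the explicit factor $\exp\bigl[2i\alpha\bigl(-\tfrac{z}{2}+\tfrac{1}{1-\sqrt{(z-1)/(z+1)}}\bigr)\bigr]$, whose chosen branch is analytic off $(-\infty,-1]\cup[1,+\infty)$ and has asymptotics $e^{i\alpha z\,\mathrm{sign}(\mathrm{Im}\,z)+O(1)}$; multiplying by a function $\rho$ from an infinite-dimensional family in $H^2_\pm$ (with a zero of order $n-1$ at $0$ for the polynomial case, matching your $g=p^{n-1}H$ substitution) then yields everything at once, including the infinite-dimensionality you need. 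Note also that you cannot outsource existence to Beurling/Amrein--Berthier type theorems: as the paper's commentary observes, those do not directly apply because of the weight $|p|$. Without this construction, or an equivalent one, your argument establishes only the reduction, not the proposition.
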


\begin{proof} Let us first consider the case $ n = 1 $, $ \varphi_1 \equiv 1 $.
Let $ I $ be an arbitrary closed interval, $ \chi ( q ) $ its indicator function. We shall search for the function $ F ( q , p ) $ in the form of the product $ \chi ( q ) f ( p ) $ where $ f  \in L^2 ( \mathbb{R}
, |p| \d p ) $ is a function vanishing on $ [ - 1 , 1 ] $ and such that $ \hat{f} $ vanishes on an interval $ [ -M, M ] $. It is clear that for $ M $ large enough such a function $ F $ obeys all the conditions of theorem \ref{th2}. A supply of functions $ f $ with the desired properties is provided by the following lemma.

\begin{lemma}
\label{example} Let $\alpha > 0$, and let $ \rho ( z ) $ be an arbitrary nonzero function analytic in the plane cut along a compact interval $ J \subset (-\infty, -1] $ and such that $
\rho ( z) = O \left( \left| z \right|^{-2 } \right) $ when $ |z|
\to \infty $ uniformly in $ \mathrm{arg}\, z $, and the restrictions of $ \rho
( z ) $ to $ \mathbb{C}_\pm $ belong to $ H^2_\pm $, respectively. Define the function 
\begin{equation} \label{Falp}
  \phi_\alpha (z) = \exp\left[2i\alpha\left(-\frac{z}{2} +
 \frac{1}{ 1-\sqrt{\frac{z-1}{z+1}} }\right)\right] \rho ( z ) ,
\end{equation}
where the branch of the square root is chosen so that $ \phi_\alpha
( z ) $ be analytic in the plane cut along the rays $ (-\infty,
-1]\cup [ 1, +\infty) $, and $\mathrm{Im}\sqrt{\frac{z-1}{z+1}}
> 0 $. Let $ f_\alpha^\pm $ be the boundary values of the function $ \phi_\alpha $ on the real axis in the sense of the Hardy classes.

Then the (obviously, nonzero) function
\[
f_\alpha (x) \overset{\mathrm{def} }{=}
   f_\alpha^+ (x) - f_\alpha^- (x)
\]
obeys:
\begin{enumerate}
\item  $f_\alpha \in L^2(\R{},|p|\d p) $;
\item  $f_\alpha (x) = 0$ for $|x|<1$;
\item  $\Hat{f}_{\alpha}(p) =  0$ for $ |p| \le\alpha $.
\end{enumerate}
\end{lemma}

\begin{proof}
Property 2 is obvious. Since the boundary values of the exponent in (\ref{Falp}) on the real axis have the modulus $ \le 1 $ for the given choice of the square root brunch, the inclusion $f^{\pm}_\alpha \in L^2(\R{}, |p| \d
p) $ is immediate from the assumptions about the function $ \rho ( z ) $. It remains to check the property 3.

The following asymptotics hold for $\lvert z\rvert\to\infty$ in each of the halfplanes $
\mathbb{C}_\pm $ uniformly in $ \mathrm{arg}
\, z $:
\begin{eqnarray*}
 \frac{1}{ 1-\sqrt{\frac{z-1}{z+1}} } =\frac{1}{
1-\left(1-\frac{1}z+ O\left(\frac{1}{z^2}\right)\right) }= z +
O(1),\; \; \text{for} \;  \mathrm{Im}\, z
> 0; \\ \frac{1}{ 1-\sqrt{\frac{z-1}{z+1}} }=\frac 1{
1+\left(1-\frac{1}z+ O\left(\frac{1}{z^2}\right)\right) }= O(1),
\;\; \text{for}\; \mathrm{Im}\, z < 0 . &
\end{eqnarray*}
Therefore for $ \lvert z\rvert\to\infty $ we have:
\[
\exp\left[2i\alpha\left(-\frac{z}{2} +
 \frac{1}{ 1-\sqrt{\frac{z-1}{z+1}} }\right)\right]=
\exp\big(i\alpha z\,\mathrm{sign}(\mathrm{Im}\,z)+O(1)\big).
\]
Thus, the restrictions of the functions $ e^{\mp i\alpha z}\phi_\alpha $ to the halfplanes $ \mathbb{C}_{\pm} $ are in $ H^2_\pm $, respectively.
By the Paley-Wiener theorem this implies that
$\widehat{f^\pm_\alpha}(p)=0$ when $ \pm p\le\alpha $, hence $
\Hat{f}_\alpha (p ) = 0 $ for $ |p| \le \alpha $.
\end{proof}

For the function $ \rho $ in this lemma one can take, for instance, the branch of the function $ \ln^n\left(\dfrac{z+a}{z+b}\right) $, $ 1 < b < a
$, $ n \ge 2 $, analytic in the plane cut along the interval $ [-a, -b] $, fixed by the condition
${\left.\mathrm{Im}\ln{\frac{z+a}{z+b}}\right|}_{z=0}=0$.

\noindent {\it End of proof of proposition
\ref{compact}.} Let $ \alpha $ be a number such that $ | q x | < \alpha $ for all $ q \in I $, $ x \in \mathrm{supp} \, c $, 
$f_\alpha \in L^2 ( \mathbb{R} , |p| \d p ) $ an arbitrary function vanishing on   $ [ - 1 , 1 ] $ and such that $
\hat{f_\alpha} $ vanishes on the interval $ [ - \alpha , \alpha ] $.
Let $ F ( q , p ) = \chi ( q ) f_\alpha ( p ) $. Define a vector $ g \in H_0 $ via the function $ F $ in the same way as in the proof of theorem 2. The restriction of the operator $ L $ to its reducing subspace $
Y = Y( f_\alpha ) $, generated by the vector $ g $, is unitarily equivalent to the operator of multiplication by the independent variable in the space $ L^2 ( I ) $, and if functions $ f_{\alpha , j }
$, $ j = 1 , \dots , n < \infty $, are mutually linear independent, then so are the corresponding subspaces $ \left\{ Y( f_{\alpha , j } )
\right\} $. The assertion of the proposition now follows from this and the fact that the linear space of functions $ f_\alpha $ constructed in lemma \ref{example} is infinite-dimensional.

The general case ($ n \ne 1 $) is considered in a similar way, we only require additionally the function $ \rho $ in lemma \ref{example} to have a zero of order $ n - 1 $ at the point $ 0 $. If this requirement is satisfied, the Fourier transforms of $ f p^{ -j }
$ vanish on $ [ - \alpha , \alpha ] $ for all $ j \le
n - 1 $, and the proof proceeds as above.
\end{proof}

{\textit {Commentary to the proof of theorem 2.}} The question if there exists a nonzero function $ f \in L^2 (\mathbb{R}) $ such that the restrictions
$ \left. f \right|_S = 0 $ and $ \left. \hat{f}
\right|_\Sigma = 0 $ for a given interval $ S $ and a set $
\Sigma \subset \mathbb{R} $ is known as the Beurling prob\-lem and has been studied for a long time \cite{HJ}. For instance, the Amrein-Berthier theorem \cite{HJ} establishes the existence of such functions if the set $ \Sigma $ has fini\-te measure, the Kargaev theorem \cite{kargaev} -- in a situation generalizing theorem 2 to the case of gaps narrowing at infinity. These results are, however, not immediately applicable to the problem under consideration, when the func\-tion $ f $ is subject to an additional condition of square summability with the grow\-ing weight $ |p| $. To use them, one would have to smoothen up the func\-tions cons\-tructed which would lead to assertions close to theorem 2 and pro\-position
\ref{compact}, obtained here by elementary methods.

The selfadjoint subspace found in theorem 2 is quite large, and it is na\-tu\-ral to ask if there is much else. On this is the following

\begin{remark}
Results in paper \cite{KNR} show that in the isotropic problem the essential spectrum of the restriction of the operator $ L $ to $ H_0^\perp $
coincides with the real line if the function $ c $ is compactly supported, and     $ c ( x ) \ge 0 $ a. e.
\end{remark}

In the direction opposite to theorem 2 the following simple assertion holds.

\begin{statement} \label{halfax} Let the function $c \in L^\infty (\mathbb{R})$ be such that $ c ( x ) \ne 0 $ a. e. on a semi-axis. Then the Boltzmann operator 
\[
L= i\mu\p_x + i c(x) \int_{-1}^1 \cdot\,\d\mu'
\]
is completely nonself-adjoint.
\end{statement}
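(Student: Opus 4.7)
The plan is to apply theorem \ref{th2}: the selfadjoint subspace $H_0$ is isomorphic to the subspace $X\subset L^2(\R{2}, |p|\,\d q\,\d p)$ cut out by conditions (\ref{Lmu}) and (\ref{criterium}) (with $\varphi\equiv 1$), so it is enough to show that $X = \{0\}$. Without loss of generality I would assume $c(x)\ne 0$ for a.e. $x\in [a, +\infty)$; the opposite semi-axis is handled identically.

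The first step is to pass to the Fourier transform $G := \tF F$ in the second variable. This is legitimate because $F$ vanishes on $|p|<1$, so the weight is $\ge 1$ on its support and $F\in L^2(\R{2})$. Condition (\ref{criterium}) then becomes $G(q, -xq) = 0$ for a.e. $q\in\mathbb{R}$ and a.e. $x\in\mathrm{supp}\, c$. A Fubini argument combined with the substitution $s = -xq$ (valid for $q\ne 0$) together with the hypothesis on $c$ then yields: for a.e. $q>0$, the section $G(q,\cdot)$ is supported in $[-aq, +\infty)$; for a.e. $q<0$, it is supported in $(-\infty, -aq]$.

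The second step is Paley-Wiener. For a.e. $q>0$, the Fourier transform of $p\mapsto e^{iaqp}F(q,p)$ equals $G(q,\cdot - aq)$, which is supported in $[0,+\infty)$, so $e^{iaqp}F(q,p)$ is the boundary value of a function in $H^2_+$. This boundary function vanishes on $(-1,1)$ because $F(q,\cdot)$ does, and by the classical uniqueness for Hardy classes (boundary values of a nonzero $H^2_+$ function cannot vanish on a set of positive measure), $F(q,\cdot)\equiv 0$ for a.e. $q>0$. The symmetric argument using $H^2_-$ takes care of $q<0$, giving $F\equiv 0$ and hence $H_0=\{0\}$. I expect the only delicate step to be the measure-theoretic bookkeeping that converts (\ref{criterium}) into the clean half-line support statement for $G(q,\cdot)$; once this is in place, Paley-Wiener and Hardy-class uniqueness close the argument immediately.
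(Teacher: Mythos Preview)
Your proposal is correct and follows essentially the same route as the paper's proof: reduce via theorem~\ref{th2} to showing $X=\{0\}$, use condition~(\ref{criterium}) to see that the Fourier transform of $F(q,\cdot)$ vanishes on a half-line, invoke Paley--Wiener to place $F(q,\cdot)$ (up to a unimodular factor) in $H^2_\pm$, and conclude from Hardy-class uniqueness and $F(q,p)=0$ on $|p|<1$ that $F(q,\cdot)\equiv 0$. The only cosmetic difference is that the paper normalizes the semi-axis to start at $0$ (which is harmless since translating $c$ yields a unitarily equivalent operator), whereas you carry the shift $a$ through explicitly via the factor $e^{iaqp}$.
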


\begin{proof}
Without loss of generality one can assume that $ c ( x ) \ne 0 $ for a. e.
$ x > 0 $. Suppose that the selfadjoint subspace $ H_0
\ne \{ 0 \} $. Then by theorem \ref{th2} (see (\ref{criterium})) there exists a nonzero function $ F ( q , p ) \in L^2(\R{2})$ such that for a. e. $ q > 0 $ we have: $ ( \mathcal{F}^* F) ( q , x
) = 0 $ for a. e. $ x > 0 $. By the Paley-Wiener theorem this implies that
$F(q,\cdot)\in H^2_+ $ for a. e. $ q > 0 $, and, since $
F(q,p)=0 $ for $ |p| < 1 $, by properties of the Hardy classes it follows that the function $ F(q,\cdot) = 0 $ for a. e. $ q
> 0 $. Similarly, one considers the case $ q < 0 $. We thus obtain that $ F $ is the zero function, a contradiction.
\end{proof}

The following proposition is aimed at clarifying the main result of theorem 2. The operator in a strip of half-width dealt in it has possibly no physical relevance.

\begin{statement} Let $\varphi \in L^\infty ( 0 , 1 ) $,
$ \mathrm{supp} \, \varphi =  [ 0 , 1] $; $ c \in L^\infty (
\mathbb{R} ) $, and let $ L $ be an operator in the Hilbert space $ H = L^2(\R{}\times[0,1])$ defined by the formula
\begin{equation}
( Lu ) ( x , \mu ) = i\mu \left( \p_x u \right) ( x , \mu ) + i
c(x)\varphi (\mu) \int_0^1 u ( x , \mu^\prime ) \overline {\varphi
(\mu')}\,\d\mu'  \label{Bohalf}
\end{equation}
on a natural domain of its real part $ L_0
= i\mu \p_x $. Then the operator $ L $ is comp\-letely nonself-adjoint if $
c \not \equiv 0 $.
\end{statement}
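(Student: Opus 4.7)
The plan is to imitate the strategy of Proposition \ref{halfax}, exploiting the fact that in the half-strip setting $\mu \in [0,1]$, the change of variables $p = 1/\mu$ sends $\mu$ to the half-line $[1,\infty)$ rather than to the two-sided complement of $(-1,1)$. This is exactly the extra structure that puts the relevant Fourier transforms into a Hardy class, so a uniqueness theorem can be invoked instead of just the $H^2_+$-trick used in Proposition \ref{halfax}.

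First, I would spell out the half-strip analog of Theorem \ref{th2}: the transforms $\Phi$, $\Phi^{\ast}$, $\Psi$ from Section 3 are defined verbatim with $[-1,1]$ replaced by $[0,1]$, and the proofs of Lemma 1, Lemma \ref{qwerty} and Corollary \ref{l1} require only notational changes. This yields a bijection between $H_0$ and the space of $F(q,p) \in L^2(\mathbb{R}^2,|p|\d q\d p)$ with $F(q,p) = 0$ for $p < 1$, satisfying
\[
\int_1^\infty e^{ipxq}\, F(q,p)\, \overline{\varphi(p^{-1})}\,\d p = 0
\]
for a.e.\ $q \in \mathbb{R}$ and a.e.\ $x \in \mathrm{supp}\, c$. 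Assume towards contradiction that such a nonzero $F$ exists, and consider $g_q(p) := F(q,p)\overline{\varphi(1/p)}$.

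The key observation is that $g_q$ is supported in $[1,\infty) \subset [0,\infty)$, so by the Paley--Wiener theorem its Fourier transform $\mathcal{F}g_q$ is, for a.e.\ $q$, the boundary value of a function in $H^2_-$. The vanishing condition says that $(\mathcal{F}g_q)(-xq) = 0$ for a.e.\ $x \in \mathrm{supp}\,c$. Since $c \not\equiv 0$, the set $\mathrm{supp}\,c$ has positive Lebesgue measure, hence for each fixed $q \ne 0$ so does $\{-xq : x \in \mathrm{supp}\,c\}$. Because an $H^2_-$-function whose boundary values vanish on a set of positive measure on $\mathbb{R}$ is identically zero, we conclude $\mathcal{F}g_q \equiv 0$, hence $g_q \equiv 0$ for a.e.\ $q$. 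The assumption $\mathrm{supp}\,\varphi = [0,1]$ gives $\varphi(1/p) \ne 0$ for a.e.\ $p \ge 1$, so $F(q,p) = 0$ for a.e.\ $p \ge 1$; combined with $F = 0$ on $p<1$ this yields $F \equiv 0$, a contradiction, and hence $H_0 = \{0\}$.

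The only nonroutine step is the first one: transcribing the machinery of Sections 3--4 from $\mu \in [-1,1]$ to $\mu \in [0,1]$. This is conceptually trivial but requires some care because in the proof of Theorem \ref{th2} one uses that both $F(q,p)$ and $\Phi f$ can be supported in $|p| > 1$ (resp.\ $|\mu| < 1$) symmetrically; in our case only the side $p \ge 1$ is present, but this actually simplifies the argument. Once that is in place, the main estimate is the standard Hardy-class uniqueness fact, and the hypothesis $\mathrm{supp}\,\varphi = [0,1]$ is used precisely (and only) to kill the weight $\overline{\varphi(1/p)}$ and conclude $F \equiv 0$.
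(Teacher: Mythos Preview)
Your proposal is correct and follows essentially the same route as the paper: reduce via the half-strip analogue of Theorem~\ref{th2} to showing that any $F$ supported in $\{p\ge 1\}$ and satisfying the vanishing condition must be zero, then observe that $G(q,p)=F(q,p)\overline{\varphi(1/p)}$ is supported in $[1,\infty)$ so its Fourier transform lies in a Hardy class, and invoke the uniqueness theorem together with $\mathrm{supp}\,\varphi=[0,1]$. The paper's proof is terser but identical in substance; your explicit remarks on why $\{-xq:x\in\mathrm{supp}\,c\}$ has positive measure for $q\ne 0$ and on how the hypothesis on $\mathrm{supp}\,\varphi$ enters are welcome clarifications.
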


\begin{proof}
Arguing as in the poof of theorem 1, it is easy to see that the operator defined by (\ref{Bohalf}) is completely nonself-adjoint if any function $F( q , p ) \in L^2(\R{2},|p| \d q \d p)$, satisfying the condition (\ref{criterium}) and such that $ F(q,p)=0 $ for $ p < 1 $, vanishes identically. The condition
(\ref{criterium}) means that for a. e. $ q \in \mathbb{R} $ the Fourier transform in the second variable of the function $ G ( q , p )
= F(q,p)\overline{\varphi \left( p^{ -1} \right)}$ vanishes on a set of positive measure. On the other hand, $ G ( q , p ) = 0 $ for $ p < 1 $. By properties of the Hardy classes this implies that the function $ G (q,\cdot) \equiv 0 $ for a. e. $ q \in \mathbb{R} $, and thus $ F \equiv 0 $.
\end{proof}

A similar assertion holds for the strip $ \R{}
\times[-1,0]$. Considering the ortho\-gonal sum, we obtain the following

\begin{consequence} \label{half}
Let the functions $ \varphi_{1,2} \in L^\infty ( -1 , 1 ) $ be such that $ \mathrm{supp}\,\varphi_1 = [ 0 , 1 ] $, $
\mathrm{supp}\,\varphi_2 = [ -1 , 0 ] $. Then the Boltzmann operator of the form
\[ L= i\mu\p_x + i
c_1(x) \varphi_1 ( \mu ) \int_{-1}^1 \cdot\,\overline{\varphi_1
(\mu')}\,\d\mu' + i c_2 (x) \varphi_2 (\mu)\int_{-1}^1
\cdot\,\overline{\varphi_2 (\mu')}\,\d\mu'
\]
is completely nonself-adjoint if neither of the functions $c_1 $, $ c_2 $ vanishes identi\-cally.
\end{consequence}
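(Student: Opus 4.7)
The plan is to realize $L$ as the orthogonal direct sum of two operators, each covered by the preceding proposition (or its mirror image on $\R{}\times[-1,0]$ mentioned in the accompanying remark), and then to invoke the easy fact that an orthogonal sum of completely nonself-adjoint operators is completely nonself-adjoint.

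First, I would decompose $H = L^2(\R{}\times[-1,1]) = H_+ \oplus H_-$, where $H_+ := L^2(\R{}\times[0,1])$ and $H_- := L^2(\R{}\times[-1,0])$ (the overlap $\{\mu = 0\}$ is a null set). The real part $L_0 = i\mu\p_x$ acts fiberwise in $\mu$ and manifestly preserves each summand. As for the two rank-one interaction terms, the first one produces a function proportional to $\varphi_1$, which by hypothesis is supported in $[0,1]$, and its integral samples input values only on $\mathrm{supp}\,\overline{\varphi_1} = [0,1]$; hence it annihilates $H_-$ and maps $H_+$ into $H_+$. The second term, by the same reasoning with roles reversed, annihilates $H_+$ and maps $H_-$ into $H_-$. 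This yields $L = L_+ \oplus L_-$, where $L_+$ has exactly the form (\ref{Bohalf}) on $H_+$ with coefficient $c_1$ and profile $\varphi_1$, and $L_-$ is its mirror on $H_-$ with coefficient $c_2$ and profile $\varphi_2$.

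By the preceding proposition applied to $L_+$ (using $c_1 \not\equiv 0$) and its analogue for the lower half-strip noted in the remark just above (using $c_2 \not\equiv 0$), both $L_\pm$ are completely nonself-adjoint, i.e., their selfadjoint subspaces $H_0^\pm$ are trivial.

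Finally, I would apply Proposition~\ref{th1} to $L = A + iK$ with $A = A_+ \oplus A_-$ and $K = K_+ \oplus K_-$. Since $\mathrm{Ran}\,K = \mathrm{Ran}\,K_+ \oplus \mathrm{Ran}\,K_-$ and $e^{iAt}$ preserves each half-space,
\[
H_1 = \bigvee_{t\in\R{}} e^{iAt}\mathrm{Ran}\,K = H_1^+ \oplus H_1^-,
\]
so that $H_0 = H_1^\perp = H_0^+ \oplus H_0^- = \{0\}$. The only verification that really requires attention is the invariance of the half-strips under the two rank-one terms, and this is guaranteed precisely by the disjointness of $\mathrm{supp}\,\varphi_1$ and $\mathrm{supp}\,\varphi_2$; everything else is routine.
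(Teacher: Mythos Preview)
Your argument is correct and matches the paper's own approach exactly: the corollary is obtained by writing $L$ as the orthogonal sum of the half-strip operator from the preceding proposition and its mirror image on $\R{}\times[-1,0]$, each completely nonself-adjoint when the corresponding $c_j\not\equiv 0$. The only difference is that you have spelled out the details (the invariance of the half-strips and the use of Proposition~\ref{th1}) that the paper leaves implicit in a single sentence.
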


Thus, in the anisotropic case the Boltzmann operator may turn out to be completely nonself-adjoint for perturbations having arbitrarily small support.

\section{Three Dimensional Boltzmann Operator}

\hspace{\parindent} Let $ \mathbb{S}^2 = \{ s \in \mathbb{R}^3 :
\; |s| = 1 \} $. The 3D Boltzmann operator acts in the space 
$L^2(\R{3}\times \mathbb{S}^2)$ of functions $u =
u(x,\mu)$ ($x\in\R{3}$, $\mu\in \mathbb{S}^2$) by the formula:
\begin{equation}
( Lu ) ( x , \mu ) = i\mu \left( \nabla_x u \right) ( x , \mu ) +
i \sum_{\ell=1}^{n}c_{\ell}(x)\varphi_{\ell}(\mu)
    \int_{\mathbb{S}^2} u ( x , \mu^\prime ) \overline {\varphi_\ell (\mu')}\,\d S(\mu').
\label{Bo3}
\end{equation}

Here $c_{\ell} \in L^{\infty}(\R{3}) $ and $ \varphi_{\ell} \in L^2
( \mathbb{S}^2 ) $, $\ell = 1,...,n$, are known functions.
The operator $ L $ is a bounded perturbation of the operator $ L_0 =
i\mu\nabla_x $ selfadjoint on is natural domain.

\begin{theorem}\label{3dim}
The selfadjoint subspace of the Boltzmann operator (\ref{Bo3})
is non-trivial.
\end{theorem}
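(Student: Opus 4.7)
\textit{Plan.} I would mirror the strategy of section 4 in the 3D setting. Since proposition \ref{th1} is purely abstract and $A = L_0 = i\mu\cdot\nabla_x$ still generates $(e^{itA}f)(x,\mu) = f(x-\mu t,\mu)$, the argument of corollary \ref{l1} goes through verbatim to give: $f \in H_0$ if and only if
\[
\int_{\mathbb{S}^2} f(y+\mu t,\mu)\,\overline{\varphi_\ell(\mu)}\,\d S(\mu) = 0
\]
for every $\ell$, every $t \in \R{}$ and a.e.\ $y \in \mathrm{supp}\,c_\ell$. I would then look for a non-zero $f$ satisfying this for \emph{every} $y\in\R{3}$. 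Passing to the Fourier transform $\hat f(\xi,\mu)$ in $x$ and using that $e^{it\xi\cdot\mu}$ depends on $\mu$ only through $\hat\xi\cdot\mu$, the stronger requirement becomes: for a.e.\ $\xi\in\R{3}$ and each $\ell$, the function $\hat f(\xi,\cdot)\overline{\varphi_\ell}$ is orthogonal in $L^2(\mathbb{S}^2)$ to every function zonal around the $\hat\xi$-axis.

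The main step is the direct construction of $\hat f$ in the separated form $\hat f(\xi,\mu) = \eta(\xi)\,Y(\mu,\hat\xi)$, with $\eta \in L^2(\R{3})$ non-zero and $Y(\mu,\omega)$ chosen so that $Y(\cdot,\omega)\,\overline{\varphi_\ell}$ has vanishing zonal projection around the $\omega$-axis for every $\omega$ and every $\ell$. In the isotropic case $n=1$, $\varphi_1 \equiv 1$, the scalar $Y(\mu,\omega) := (\omega \wedge \mu)\cdot e_0$ (for any fixed $e_0 \in \R{3}$) has zero integral on each $\omega$-latitude circle by the one-line identity $\int_0^{2\pi}\cos\phi\,\d\phi = 0$; this corresponds to the natural 3D choice $f(x,\mu) = \mu \cdot \vec g(x)$ with $\vec g$ an $L^2$ divergence-free vector field. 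When the $\varphi_\ell$ are polynomials of degree at most $d$, one picks a measurable orthonormal frame $\{e_1(\omega), e_2(\omega)\}$ of $\omega^\perp$ (defined off a closed set of measure zero, which is enough for $L^2$ purposes) and takes
\[
Y(\mu,\omega) := \bigl((e_1(\omega) + i\,e_2(\omega)) \cdot \mu\bigr)^N, \qquad N > d,
\]
a function of pure azimuthal index $N$ around the $\omega$-axis; the product $Y\,\overline{\varphi_\ell}$ then has azimuthal indices in $\{N-d,\dots,N+d\}$, none equal to $0$, so its zonal projection vanishes. The resulting $f$ lies in $L^2(\R{3}\times\mathbb{S}^2)$ because its squared norm factorises into $\|\eta\|^2$ times the finite $\omega$-independent integral $\int_{\mathbb{S}^2}|Y(\mu,\omega)|^2\,\d S$, and is non-zero.

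The principal obstacle is extending the construction to arbitrary $\varphi_\ell \in L^2(\mathbb{S}^2)$, for which no uniform bound on the azimuthal indices is available. The needed fact is that the closed span $T_\omega \subset L^2(\mathbb{S}^2)$ of the products $\varphi_\ell(\mu)\,g(\omega\cdot\mu)$, $g \in L^2([-1,1])$, is a proper subspace for a.e.\ $\omega$, admitting a non-zero section $\omega \mapsto Y(\cdot,\omega) \in T_\omega^\perp$ measurable in $\omega$. I would prove this by a dimension count at each level $\mathcal{Y}_k$ of the spherical harmonic decomposition: $\dim\mathcal{Y}_k = 2k+1$ grows with $k$, while the intersection $T_\omega \cap \mathcal{Y}_k$ is controlled by the Clebsch-Gordan couplings of the $\varphi_\ell$ with the zonal basis and remains proper for $k$ large enough, providing the required freedom for $Y(\cdot,\omega)$.
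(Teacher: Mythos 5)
Your plan follows essentially the same route as the paper: pass to the Fourier transform in $x$, observe that $e^{it\langle p,\mu\rangle}$ depends on $\mu$ only through the polar angle about $\hat p$, and exhibit a non-zero $\hat f$ such that $\hat f(p,\cdot)\,\overline{\varphi_\ell}$ integrates to zero over every latitude circle about $\hat p$. The paper is in fact terser --- it merely asserts that any smooth $u$ with vanishing latitude integrals lies in $H_0$ --- so your explicit choices of $Y(\mu,\omega)$ supply detail the paper leaves implicit; for general $\varphi_\ell\in L^2(\mathbb{S}^2)$ the direct azimuthal Fourier expansion on each latitude circle ($n$ linear conditions against infinitely many modes) settles the point more simply than the spherical-harmonic dimension count you sketch.
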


\begin{proof}  Let $ U: H \to H $ be the Fourier transform in the $ x $ variable. Let $ \hat L = U L U^* $, $ \hat L_0 = U L_0
U^* $ etc.% Для доказательства теоремы достаточно построить
%нетривиальный элемент самосопряженного подпространства $ \hat{ H_0 } $
%оператора $ \hat L $.

As in the 1D case, a vector $u$ belongs to the selfadjoint subspace of the operator (\ref{Bo3}) if, and only if
\[
\int_{\mathbb{S}^2} v(x-\mu
t,\mu)\overline{\varphi_{\ell}(\mu)}\,\d S(\mu) = 0
\]
for all $\ell = 1,...,n$, $t\in\R{}$, and a. e. $x\in\mathrm{supp}\,c_{\ell}$.
It is easy to see that this equality is satisfied if $\hat{v}:=Uv$ obeys
\begin{equation}
\label{bobobo}
\int_{\mathbb{S}^2} \exp\big(it\langle p,\mu\rangle_{\R{3}}\big)
 \hat{v}(p,\mu)\overline{\varphi_{\ell}(\mu)}\,\d S(\mu) = 0
\end{equation}
for a. e. $p\in\R{3} $ and all $t\in\R{}$, $\ell = 1,...,n$.

For each $p\in \R{3}$ define the spherical coordinates $(\psi_{p}, \theta_{p})$ on the sphere $\mathbb{S}^2$ of the
$ \mu $ variable choosing the polar axis aimed along the vector $ p $. Here $\theta_p $ and $\psi_p$ are the azimuthal and precession angles, respectively. Then, obviously, any smooth function $u\in L^2(\R{3}\times
\mathbb{S}^2)$ such that
\[
\int_{-\pi}^{\pi} u\big(p,\mu(\psi_{p}, \theta_{p})
\big)\overline{\varphi_{\ell}\big(\mu(\psi_{p},
\theta_{p})\big)}\,\d\psi_{p} = 0
\]
for a. e. $p\in\R{3}$, $\theta_{p}\in
[-\frac{\pi}{2},\frac{\pi}{2}]$, $\ell = 1,...,n$, satisfies (\ref{bobobo}), and hence $U^\ast u$ belongs to the subspace
$ H_0 $. \end{proof}

\begin{remark} The reducing subspace of the selfadjoint part of the operator constructed in the course of the proof, is, in general, a proper subspace in $ H_0 $.
\end{remark}

\pagebreak

\end{document}